\definecolor{hot}{RGB}{65,105,225}
\theoremstyle{plain}
\newtheorem{theorem}{Theorem}[section]
\newtheorem{prop}[theorem]{Proposition}
\newtheorem{conj}[theorem]{Conjecture}
\newtheorem{lemma}[theorem]{Lemma}
\theoremstyle{definition}
\newtheorem{defn}[theorem]{Definition}
\newtheorem{rmk}[theorem]{Remark}
\newtheorem*{ex*}{Example}
\newcommand\sF{{\mathcal F}}
\def\fg{\mathfrak{g}}
\newcommand\pp{{\mathbb{P}}}
\newcommand\zz{{\mathbb{Z}}}
\newcommand\cc{{\mathbb{C}}}
\newcommand\nn{{\mathbb{N}}}
\newcommand\hh{{\mathbb{H}}}
\DeclareMathOperator{\reg}{reg}                  
\DeclareMathOperator{\sing}{sing}                  
\DeclareMathOperator{\gdeg}{gdeg}
\DeclareMathOperator{\MLdeg}{MLdeg}
\DeclareMathOperator{\IC}{IC}
\title[Maximum Likelihood degree]{Bounding the maximum likelihood degree}
\begin{document}
\author{Nero Budur}
\email{Nero.Budur@wis.kuleuven.be}
\address{KU Leuven and University of Notre Dame}
\curraddr{KU Leuven, Department of Mathematics,
Celestijnenlaan 200B, B-3001 Leuven, Belgium}

\author{Botong Wang}
\email{bwang3@nd.edu}
\address{University of Notre Dame}
\curraddr{Department of Mathematics,
 255 Hurley Hall, IN 46556, USA} 

\keywords{Very affine variety, intersection cohomology, algebraic statistics, maximum likelihood estimation.}
\subjclass[2010]{14F45, 32S60, 55N33, 62E10, 62H12.}
\thanks{The first author was partly sponsored by the Simons Foundation, NSA, and a KU Leuven OT grant.
}

\begin{abstract}
Maximum likelihood estimation is a fundamental computational problem in statistics. In this note, we give a bound for the maximum likelihood degree of algebraic statistical models for discrete data. As usual, such models are identified with special very affine varieties. Using earlier work of Franecki and Kapranov, we prove that the maximum likelihood degree is always less or equal to the signed intersection-cohomology Euler characteristic. We construct counterexamples to a bound in terms of the usual Euler characteristic conjectured by Huh and Sturmfels. 
\end{abstract}


\maketitle
\section{Maximum Likelihood degree}

Maximum likelihood estimation is a fundamental computational problem in statistics. In order to estimate the parameters of a statistical model, one tries to maximize a likelihood function. In this note, we address algebraic statistical models for discrete data. As usual, such models are identified with special very affine varieties, that is, with closed subvarieties of $(\cc^*)^n$ for which the coordinates of every point sum up to 1. The number of critical points of the likelihood function for generic data depends only on the model and it is called the maximum likelihood degree. In this note, we give a bound for the maximum likelihood degree. 

The maximum likelihood degree was introduced in \cite{CHKS}. We will follow directly the algebraic definition of \cite{Hu}. For the explanation of how one gets from statistics to algebra, see \cite{CHKS, Hu, HS}. Let $(p_1, \ldots, p_n)$ be the coordinates of the complex torus $(\cc^*)^n$. 

\begin{defn}
Let $X$ be a closed irreducible subvariety of $(\cc^*)^n$. Denote the regular locus and the singular locus of $X$ by $X_{\reg}$ and $X_{\sing}$, respectively. The \textbf{maximum likelihood degree} of $X$ is defined to be the number of points in $X_{\reg}$, where the 1-form $$\frac{\lambda_1dp_1}{p_1}+\cdots+\frac{\lambda_ndp_n}{p_n}$$ degenerates, for a generic point 
$(\lambda_1, \ldots, \lambda_n)\in \cc^{n}$. The maximum likelihood degree of $X$ is denoted by $\MLdeg(X)$. 
\end{defn}
The maximum likelihood degree is closely related to the Euler characteristic. Let $d$ be the dimension of $X$. 
\begin{theorem}{\cite[Theorem 1]{Hu}}
When $X$ is smooth, 
$$
(-1)^d\chi(X)=\MLdeg(X). 
$$
\end{theorem}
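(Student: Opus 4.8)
The plan is to reinterpret the critical points as the zeros of a section of the cotangent bundle of $X$ and then count them by a Chern-class computation on a good compactification. Write $\omega_\lambda=\sum_{i=1}^n\lambda_i\frac{dp_i}{p_i}$. On the torus the logarithmic forms $\frac{dp_1}{p_1},\ldots,\frac{dp_n}{p_n}$ trivialize the cotangent bundle $\Omega^1_{(\cc^*)^n}$, so $\omega_\lambda$ is a global section of $\Omega^1_{(\cc^*)^n}$. Restricting along the smooth variety $X$ and using the conormal sequence
$$
0\to N^*_{X/(\cc^*)^n}\to \Omega^1_{(\cc^*)^n}\big|_X\to \Omega^1_X\to 0,
$$
the image of $\omega_\lambda$ is a global section $\eta_\lambda$ of $\Omega^1_X$, and by definition $\MLdeg(X)$ is the number of zeros of $\eta_\lambda$ on $X_{\reg}=X$ for generic $\lambda$. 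So the entire point is to count zeros of a section of a rank-$d$ bundle, with $d=\dim X$; the only obstruction to the usual ``number of zeros $=$ top Chern number'' principle is that $X$ is noncompact.

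First I would pass to a smooth projective compactification $\overline{X}\supseteq X$ whose boundary $D=\overline{X}\setminus X$ is a simple normal crossings divisor, arranged (by resolving the closure of $X$ inside a smooth complete toric compactification of $(\cc^*)^n$) so that each $\frac{dp_i}{p_i}$ extends to a global section of the logarithmic cotangent bundle $\Omega^1_{\overline{X}}(\log D)$. Then $\omega_\lambda$ extends to a section of $\Omega^1_{\overline{X}}(\log D)$ agreeing with $\eta_\lambda$ on $X$. This bundle has rank $d$, so once I know its generic section has finitely many transverse zeros all lying in $X$, their number equals the top Chern number $c_d\big(\Omega^1_{\overline{X}}(\log D)\big)\cap[\overline{X}]$.

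Next I would establish two genericity statements. For transversality, note that the fiberwise linear map $\lambda\mapsto\eta_\lambda(x)$ is surjective onto $\Omega^1_{X,x}$ for every $x$ (because $\Omega^1_{(\cc^*)^n}$ is trivialized by the $\frac{dp_i}{p_i}$, which surject onto $\Omega^1_X$); hence the linear system $\{\eta_\lambda\}$ is base-point free, and a Bertini-type argument shows that for generic $\lambda$ the section $\eta_\lambda$ meets the zero section transversely, giving isolated multiplicity-one zeros. The key and hardest point is that for generic $\lambda$ the extended section has no zeros on $D$. Along any stratum of $D$ the restriction of $\omega_\lambda$ in the logarithmic cotangent bundle is governed by its residues, which are $\zz$-linear combinations $\sum_i a_i\lambda_i$ with integer weights $a_i$ recording the orders of the $p_i$ along that boundary component; because $X$ sits nondegenerately in the torus these residue forms do not all vanish identically, so a generic $\lambda$ keeps the section nonvanishing along each boundary stratum. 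This confines all zeros to $X$ and makes the count proper, so $\MLdeg(X)=c_d\big(\Omega^1_{\overline{X}}(\log D)\big)\cap[\overline{X}]$.

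It then remains to evaluate the top Chern number. By the logarithmic Gauss--Bonnet theorem the dual log tangent bundle satisfies $c_d\big(T_{\overline{X}}(-\log D)\big)\cap[\overline{X}]=\chi(X)$; dualizing a rank-$d$ bundle introduces a sign, so
$$
c_d\big(\Omega^1_{\overline{X}}(\log D)\big)\cap[\overline{X}]=(-1)^d\,c_d\big(T_{\overline{X}}(-\log D)\big)\cap[\overline{X}]=(-1)^d\chi(X).
$$
Combining this with the count of zeros yields $\MLdeg(X)=(-1)^d\chi(X)$, as claimed. I expect the boundary-genericity step to be the main obstacle: one must choose the compactification compatibly with the toric and logarithmic structure and verify that the residue linear forms controlling the behaviour along $D$ are nonzero, which is precisely where the hypothesis that $X$ lies nondegenerately inside $(\cc^*)^n$ is used.
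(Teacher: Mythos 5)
Your proof is correct, but it is not the route this paper takes --- it is essentially the original argument of \cite{Hu}, which the paper simply cites for this statement. What you do: compactify, extend $\omega_\lambda$ to a section of $\Omega^1_{\overline{X}}(\log D)$, show generic sections have only transverse interior zeros, and evaluate the count by $c_d\bigl(\Omega^1_{\overline{X}}(\log D)\bigr)=(-1)^d c_d\bigl(T_{\overline{X}}(-\log D)\bigr)=(-1)^d\chi(X)$. What the paper's machinery does instead: by Lemma \ref{mlg}, $\MLdeg(X)=\gdeg(T^*_XG)$ for $G=(\cc^*)^n$, and for smooth closed $X$ the characteristic cycle is exactly $CC(\cc_X[d])=[T^*_XG]$, so the Franecki--Kapranov formula (Theorem \ref{Riemann}) gives $(-1)^d\chi(X)=\chi(G,\cc_X[d])=\gdeg(T^*_XG)=\MLdeg(X)$; this is precisely the smooth special case of the proof of Theorem \ref{ic}. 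The trade-off: your argument is elementary and self-contained (vector bundles, Chern classes, no perverse sheaves), but it is rigid --- nothing like it survives when $X$ is singular, which is the regime the paper cares about (Section 3 shows the statement itself fails there); the sheaf-theoretic route black-boxes \cite{FK} but then extends verbatim to $\IC(\cc_X)$, yielding the inequality of Theorem \ref{ic}. Two tightenings of your write-up. First, no toric compactification is needed: since each $p_i$ is a unit on $X$, the form $dp_i/p_i$ is automatically a section of $\Omega^1_{\overline{X}}(\log D)$ for \emph{any} SNC compactification. Second, your appeal to ``nondegeneracy'' at the boundary can be made precise: if every $p_i$ had order zero along some boundary divisor $D_j$, then every $p_i$ would be a unit at the generic point of $D_j$, so that point would map into $X$ (which is closed in the torus), contradicting that $D_j$ lies over $\overline{X}\setminus X$; hence the residue of $\omega_\lambda$ along each $D_j$ is a nonzero integral linear form in $\lambda$, and a generic $\lambda$ avoids the finitely many hyperplanes where some residue vanishes. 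With those two points filled in, your argument is a complete proof.
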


Denote the hyperplane $p_1+\cdots+p_n=1$ in $(\cc^*)^n$ by $H$. In \cite{HS}, Huh and Sturmfels made the following conjecture for subvarieties of $(\cc^*)^n$ contained in $H$, that is, for special very affine varieties:

\begin{conj}{\cite[Conjecture 1.8]{HS}}\label{conjHS}
Suppose $X$ is a closed irreducible subvariety of $H$. Then 
$$
(-1)^d\chi(X)\geq \MLdeg(X).
$$
In particular, the topological signed Euler characteristic $(-1)^d\chi(X)$ is always nonnegative. 
\end{conj}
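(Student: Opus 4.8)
The plan is to interpret both quantities as intersection numbers of characteristic cycles with a generic section of the cotangent bundle, and then to bound $\MLdeg(X)$ from above by $(-1)^d\chi(X)$ by comparing, stratum by stratum, the characteristic cycle of the constant sheaf with that of the intersection complex. Since $dp_1/p_1,\dots,dp_n/p_n$ trivialize $\Omega^1_{(\cc^*)^n}$, we have $T^*(\cc^*)^n\cong(\cc^*)^n\times\cc^n$, and the form $\omega_\lambda=\sum_i\lambda_i\,dp_i/p_i$ is precisely the constant section $\lambda$. A point of $X_{\reg}$ where $\omega_\lambda$ degenerates is one where $\omega_\lambda$ lies in the conormal space to $X_{\reg}$, so for generic $\lambda$ the number $\MLdeg(X)$ is the transverse intersection number of the conormal variety $\overline{T^*_{X_{\reg}}(\cc^*)^n}$ with $\{\omega_\lambda\}$. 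By Kashiwara's index theorem the pairing of $\{\omega_\lambda\}$ with the full characteristic cycle $CC(\cc_X)$ computes the generic-twist Euler characteristic, which on $(\cc^*)^n$ a Gabber--Loeser argument identifies with $\chi(X)$. Decomposing $CC(\cc_X)$ into the conormal of $X_{\reg}$ plus conormals $\overline{T^*_S(\cc^*)^n}$ over the singular strata $S$ with integer coefficients $e_S$ (signs absorbed), this yields
\begin{equation*}
(-1)^d\chi(X)=\MLdeg(X)+\sum_{S}e_S\,\iota_S,
\end{equation*}
where each $\iota_S\ge0$ is the nonnegative local index measuring how $\{\omega_\lambda\}$ meets $\overline{T^*_S(\cc^*)^n}$; note the singular-stratum terms vanish when $X$ is smooth, recovering Huh's theorem.

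The first half of the argument is the intersection-cohomology bound $(-1)^d\chi_{\mathrm{IC}}(X)\ge\MLdeg(X)$. Because $\IC_X$ is perverse, its characteristic-cycle multiplicities are nonnegative and its leading coefficient along $\overline{T^*_{X_{\reg}}(\cc^*)^n}$ equals $1$; the identical index computation therefore gives $(-1)^d\chi_{\mathrm{IC}}(X)=\MLdeg(X)+\sum_S\mu_S\,\iota_S$ with every $\mu_S\ge0$ and $\iota_S\ge0$, so the deeper strata can only raise the count. That the whole pairing is a genuine nonnegative integer is the content of the Franecki--Kapranov positivity theorem for perverse sheaves on the semiabelian variety $(\cc^*)^n$, and this step uses no hypothesis on $H$.

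The remaining and decisive step is the comparison $(-1)^d\chi(X)\ge(-1)^d\chi_{\mathrm{IC}}(X)$, equivalently $\sum_S e_S\,\iota_S\ge\sum_S\mu_S\,\iota_S$. Since the same nonnegative indices $\iota_S$ appear in both expansions, it suffices to control the coefficient differences $e_S-\mu_S$, which are local: each is read off the reduced (co)homology of the complex link of $S$ in $X$---equivalently, the gap between the local Euler obstruction and the local intersection-cohomology multiplicity along $S$. I would attack this by downward induction on the strata together with a stratified Morse analysis of the links, exploiting that $e_S=\mu_S$ precisely where $X$ is a rational homology manifold, so that only genuinely singular links can contribute. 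The crux---and the step I expect to be the main obstacle---is to show that these link corrections pair with the $\iota_S\ge0$ with the favorable sign; unlike the IC multiplicities, the constant-sheaf coefficients $e_S$ are not a priori nonnegative, and it is presumably here that the hypothesis $X\subset H$ must finally enter, forcing the delicate local positivity that the conjecture requires.
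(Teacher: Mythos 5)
The statement you are trying to prove is false, and this very paper's main contribution is to refute it: Section 3 constructs closed irreducible surfaces $V_m\subset H^5$ (for odd $m>1$) with
$$
\MLdeg(V_m)-\chi(V_m)=\frac{m-1}{2}>0,
$$
so no proof of Conjecture \ref{conjHS} can succeed. Your first half is fine and is exactly the paper's Theorem \ref{ic}: since $\IC(\cc_X[d])$ is perverse, its characteristic cycle $\sum_j n_j[\Lambda_j]$ has $n_j\geq 0$ with coefficient $1$ on $T^*_X G$, Franecki--Kapranov converts the Euler characteristic into $\sum_j n_j\,\gdeg(\Lambda_j)$, and $\gdeg\geq 0$ gives $(-1)^d\chi(\IC(\cc_X))\geq\MLdeg(X)$.

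The gap is your ``decisive step'' $(-1)^d\chi(X)\geq(-1)^d\chi(\IC(\cc_X))$, equivalently the favorable sign of the link corrections $e_S-\mu_S$ against the indices $\iota_S$. This is exactly where the counterexamples kill the argument, and your own suspicion about the non-positivity of the constant-sheaf coefficients is realized in the worst possible way: near each of the $\frac{m-1}{2}$ singular points of $V_m$ the variety is analytically two smooth surface branches $S_1, S_2$ crossing transversely, so $\IC(\cc_{V_m})\cong\cc_{S_1}\oplus\cc_{S_2}$ locally, yielding the exact sequence $0\to\cc_{V_m}\to\IC(\cc_{V_m})\to\bigoplus_i\cc_{P_i}\to 0$ and hence $\chi(\IC(\cc_{V_m}))=\chi(V_m)+\frac{m-1}{2}$. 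In characteristic-cycle terms, $CC(\IC(\cc_{V_m}))=[T^*_{V_m}G]$ while the constant sheaf carries the point-stratum conormal with a coefficient of the wrong sign, and that conormal $\{P_i\}\times\fg^*$ has Gaussian degree $1$ (it meets each $\Omega_\gamma$ transversely in one point), so each singular point pushes $\MLdeg$ strictly \emph{above} the signed Euler characteristic. Consequently the IC bound is an equality here, $\MLdeg(V_m)=\chi(\IC(\cc_{V_m}))>\chi(V_m)$, the inequality you need runs in the wrong direction, and the hypothesis $X\subset H$ cannot rescue it since the $V_m$ sit inside $H^5$ by construction ($V_m=\iota(U_m\setminus H^4)$ with $\iota:(\cc^*)^4\setminus H^4\to H^5$ an isomorphism). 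The correct salvageable statement is precisely the intersection-cohomology bound you proved in your first half; the comparison with the ordinary Euler characteristic must be abandoned, not refined.
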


The main attractive feature of the conjecture is that it bounds an algebraic invariant, $\MLdeg(X)$, by something that requires much less than algebra, since $\chi(X)$ is a homotopy invariant.

Moreover, Huh and Sturmfels also conjectured that $(-1)^d\chi(V)\geq 0$, where $V$ is any irreducible closed subvariety of $(\cc^*)^m$ of dimension $d$. However, we constructed in \cite{BW} a family of surfaces in $(\cc^*)^4$ of arbitrary negative Euler characteristics. The failure of the usual Euler characteristic to provide a bound is due to the presence of non-mild singularities. 

It has been known, more generally, that the usual cohomology of a stratified topological space $X$ is badly behaved in the presence of singularities. As a remedy for this, Goresky and MacPherson introduced intersection cohomology, $IH^i(X, \cc)$. This agrees with the usual cohomology $H^i(X,\cc)$ when $X$ is smooth, but is different in general.  In practice, the intersection cohomology of $X$ is the hypercohomology of the so-called intersection complex $\IC(\cc_X)$, and one defines the intersection-cohomology Euler characteristic as:
$$
\chi(\IC(\cc_X))=\sum_i (-1)^i\dim \hh^i(X, \IC(\cc_X))=\sum_i (-1)^i\dim IH^i(X, \cc).
$$
Intersection-cohomology Euler characteristics are addressed from the point of view of Gaussian degrees in \cite{FK}.

By relating the maximum likelihood degree with the Gaussian degree of \cite{FK}, we show that the example in \cite{BW} leads to arbitrarily-bad counterexamples to Conjecture \ref{conjHS}. We also show that the conjecture is true if one replaces the signed Euler characteristic  by the signed Euler characteristic of the intersection complex:

\begin{theorem}\label{ic}
Let $X$ be an irreducible subvariety of $(\cc^*)^n$ of dimension $d$. Then
$$
(-1)^d\chi(\IC(\cc_{X}))\geq \MLdeg(X). 
$$
\end{theorem}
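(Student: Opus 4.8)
The plan is to pass from topology to the microlocal geometry of the torus $T=(\cc^*)^n$, where the theorem becomes a positivity statement about the Gaussian degree of Franecki--Kapranov. Write $P=\IC(\cc_X)[d]$ for the intersection complex shifted to be a perverse sheaf, so that $\chi(T,P)=(-1)^d\chi(\IC(\cc_X))$. The cotangent bundle of the torus is canonically trivialized by the invariant forms $dp_i/p_i$, giving $T^*T\cong T\times\cc^n$, and I will view the ``Gauss map'' as the second projection $\gamma\colon T^*T\to\cc^n$. The first step is to invoke the index formula of \cite{FK}: for a constructible complex on the semiabelian variety $T$ (here an algebraic torus) the Euler characteristic equals the Gaussian degree of its characteristic cycle, $\chi(T,P)=\gdeg(CC(P))$, where $\gdeg$ of an effective conic Lagrangian cycle is computed componentwise as the degree of $\gamma$ restricted to each conormal variety.

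The second step is to identify $\MLdeg(X)$ with the contribution of the top-dimensional component of $CC(P)$. Let $\Lambda=\overline{T^*_{X_{\reg}}T}$ be the conormal variety of $X$; it is an $n$-dimensional conic Lagrangian. A covector $\lambda=(\lambda_1,\dots,\lambda_n)\in\cc^n$ corresponds to the constant section $x\mapsto(x,\lambda)$ whose value is the $1$-form $\sum_i\lambda_i\,dp_i/p_i$, and a point $x\in X_{\reg}$ lies over $\lambda$ in $\Lambda$ exactly when this form annihilates $T_xX_{\reg}$, i.e. when the form degenerates on $X_{\reg}$ at $x$. Thus the fiber $\gamma^{-1}(\lambda)\cap\Lambda$ is, for generic $\lambda$, precisely the set of critical points counted by $\MLdeg(X)$; one must check by a Bertini-type argument that for generic $\lambda$ these intersections are transverse, finite, and disjoint from the boundary $\Lambda\setminus T^*_{X_{\reg}}T$, so that $\deg(\gamma|_\Lambda)=\MLdeg(X)$.

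The final step is the positivity bookkeeping. The characteristic cycle of the perverse sheaf $P$ has the form $CC(P)=[\Lambda]+\sum_\alpha m_\alpha[\Lambda_\alpha]$, where the $\Lambda_\alpha=\overline{T^*_{S_\alpha}T}$ are conormals of strata $S_\alpha\subset X_{\sing}$; the coefficient of the top term is $1$, and the remaining multiplicities $m_\alpha$ are nonnegative because the characteristic cycle of any perverse sheaf is effective. Since each $\deg(\gamma|_{\Lambda_\alpha})\geq 0$, being the number of points in a generic fiber of a morphism of varieties, combining the three steps yields
$$
(-1)^d\chi(\IC(\cc_X))=\chi(T,P)=\deg(\gamma|_\Lambda)+\sum_\alpha m_\alpha\,\deg(\gamma|_{\Lambda_\alpha})\ \geq\ \MLdeg(X).
$$
When $X$ is smooth there are no singular strata and $P=\cc_X[d]$, so the inequality specializes to the equality of \cite[Theorem~1]{Hu}, a useful consistency check. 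I expect the main obstacle to be matching the sign and normalization conventions of \cite{FK}: one must verify that their Gaussian degree, written in our trivialization of $T^*T$, really decomposes into the nonnegative componentwise degrees above and carries the sign that turns $\chi(T,P)$ into $(-1)^d\chi(\IC(\cc_X))$ rather than its negative. The transversality and genericity claims in the second step are routine but must also be stated with care, so that the boundary conormals contribute nothing over a generic $\lambda$.
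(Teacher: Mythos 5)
Your proposal is correct and follows essentially the same route as the paper: it identifies $\MLdeg(X)$ with the Gaussian degree of the conormal variety $T^*_XG$ (the paper's Lemma \ref{mlg}), applies the Franecki--Kapranov index formula to the perverse sheaf $\IC(\cc_X[d])$, and concludes via effectivity of the characteristic cycle of a perverse sheaf together with nonnegativity of Gaussian degrees. The only cosmetic differences are your explicit trivialization of $T^*(\cc^*)^n$ and the consistency check against Huh's theorem in the smooth case, neither of which changes the argument.
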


The main attractive feature here is that the algebraic invariant $\MLdeg(X)$ is bounded by something that requires much less than algebra, the stratified homotopy invariant $\chi(\IC(\cc_{X}))$.

The first author would like to thank  B. Caffo and C. Cr\u{a}iniceanu for several discussions. 

\section{Gaussian degree}
We start with the review of the definition of Gaussian degree and the results of \cite{FK}. At the end of the section, we prove Theorem \ref{ic}.

Denote the complex torus $(\cc^*)^{n}$ by $G$, and denote its Lie algebra by $\mathfrak{g}$. Let $T^*G$ be the cotangent bundle of $G$. $T^*G$ has a canonical symplectic structure. For $\gamma\in \fg^*$, let $\Omega_\gamma$ be the graph of the corresponding left invariant 1-form on $T^*G$. Suppose $\Lambda$ is a closed Lagrangian subvariety of $T^*G$. Consider the horizontal projection $\pi: T^*G\to \fg^*$, which maps the whole section $\Omega_\gamma\subset T^*G$ to $\gamma\in \fg^*$. More precisely, it maps $(x, \eta)\in T^*G$ to $\mathsf{T}_{x^{-1}}^*(\eta)$, where $\mathsf{T}_{x^{-1}}: G\to G$ is the translation map by $x^{-1}$. Since $\dim \Lambda=\dim \fg^*=n$, the map $\pi|_\Lambda: \Lambda\to \fg^*$ is generically finite. Since $\Lambda$ is a variety, $\pi_\Lambda$ is generically smooth. Therefore, for a generic point $\gamma\in \fg^*$, the intersection $\Lambda\cap \Omega_\gamma$ is transversal and consists of finitely many points, possibly empty. The number of the intersection points is called the Gaussian degree of $\Lambda$, and denoted by $\gdeg(\Lambda)$. Clearly, the Gaussian degree of $\Lambda$ is equal to the degree of the field extension $K(\fg^*)\to K(\Lambda)$ of the function fields induced by $\pi|_\Lambda$, which is a nonnegative integer. Here we allow the induced map $K(\fg^*)\to K(\Lambda)$ to be zero map, in which case the degree of the extension is zero. 

Let $V$ be a subvariety of $G$. Denote the conormal bundle of $V_{\reg}$ in $T^*G$ by $T_{V_{\reg}}^*G$ and denote its closure in $T^*G$ by $T_V^*G$. Then $T_{V}^*G$ is an irreducible conic Lagrangian subvariety of $T^*G$. When $\gamma\in \fg$ is generic, the intersection $T_V^*G\cap \Omega_\gamma$ is contained in $T_{V_{\reg}}^*G$. 

The 1-form $\gamma$ degenerates at some point $P\in V_{\reg}$ if and only if $T_{V_{\reg}}^*G\cap \Omega_{\gamma}$ contains a point in $T_P^*G$. Therefore, one immediately has the following lemma. 

\begin{lemma}\label{mlg}
Given any irreducible subvariety $X$ of $G=(\cc^*)^n$, 
$$
\MLdeg(X)=\gdeg(T_X^*G).
$$
That is, the maximum likelihood degree of $X$ is equal to the Gaussian degree of $T_X^*G$. 
\end{lemma}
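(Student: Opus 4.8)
The plan is to read the equality off directly from the two facts established just before the statement, once the one-form in the definition of $\MLdeg(X)$ is recognized as a left-invariant form. First I would note that on $G=(\cc^*)^n$ the forms $dp_1/p_1,\dots,dp_n/p_n$ are precisely a basis of the left-invariant one-forms, i.e.\ a basis of $\fg^*$; thus the form $\lambda_1 dp_1/p_1+\cdots+\lambda_n dp_n/p_n$ appearing in the definition of the maximum likelihood degree is exactly the left-invariant form attached to $\gamma=(\lambda_1,\dots,\lambda_n)\in\fg^*$. Under this identification a generic choice of $(\lambda_1,\dots,\lambda_n)\in\cc^n$ is the same as a generic choice of $\gamma\in\fg^*$, so the two ``generic'' hypotheses in the definitions of $\MLdeg$ and $\gdeg$ coincide.

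Next I would fix such a generic $\gamma$ and compare the two point counts through the bundle projection $\rho\colon T^*G\to G$. The section $\Omega_\gamma$ assigns to each $P\in G$ a single covector $\gamma_P\in T_P^*G$, obtained by translating $\gamma$, so $\rho$ restricts to a bijection between $\Omega_\gamma$ and $G$. I would then send an intersection point of $T_X^*G\cap\Omega_\gamma$ to its image under $\rho$. For generic $\gamma$ this intersection lies in the smooth conormal locus $T_{X_{\reg}}^*G$ by the remark preceding the statement, so every such image is a point $P\in X_{\reg}$ at which the covector $\gamma_P$ is conormal, i.e.\ annihilates $T_P X$; by the degeneration criterion recalled just above, this says exactly that the form $\gamma$ degenerates at $P$. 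Conversely, since $\Omega_\gamma$ is a section, each degeneration point $P\in X_{\reg}$ has a unique lift $(P,\gamma_P)$ lying in $T_{X_{\reg}}^*G\cap\Omega_\gamma$. Hence $\rho$ gives a bijection between $T_X^*G\cap\Omega_\gamma$ and the set of points of $X_{\reg}$ where $\gamma$ degenerates.

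Finally I would count. The right-hand side $\gdeg(T_X^*G)$ is by definition the cardinality of $T_X^*G\cap\Omega_\gamma$ for generic $\gamma$, where the intersection is transversal; the left-hand side $\MLdeg(X)$ is the cardinality of the degeneration locus for generic data. The bijection of the previous paragraph identifies these two finite sets, so their cardinalities agree and the equality follows; since both invariants are independent of the generic choice, this suffices.

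I do not expect a genuine obstacle here, as the statement is designed to fall out at once of the conormal-geometry setup. The only points requiring a little care are the identification of $dp_i/p_i$ with the invariant basis of $\fg^*$, so that the genericity conditions match, and the observation that, because $\Omega_\gamma$ is a graph, the correspondence between degenerations and intersection points is automatically bijective and no separate multiplicity bookkeeping is needed on the $\MLdeg$ side—transversality of $\Lambda\cap\Omega_\gamma$ already ensures the Gaussian count is a count of reduced points.
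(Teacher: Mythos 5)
Your proposal is correct and takes essentially the same route as the paper: the paper states the lemma as an immediate consequence of the two facts you use, namely that for generic $\gamma\in\fg^*$ the intersection $T_X^*G\cap\Omega_\gamma$ lies in $T_{X_{\reg}}^*G$, and that $\gamma$ degenerates at $P\in X_{\reg}$ exactly when $\Omega_\gamma$ meets $T_P^*G$. Your write-up merely makes explicit what the paper leaves implicit---the identification of the logarithmic forms $dp_i/p_i$ with an invariant basis of $\fg^*$ and the bijection between intersection points and degeneration points via the bundle projection.
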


The result of Franecki and Kapranov \cite{FK} relates Gaussian degree with Euler characteristics. Let $\sF$ be a bounded constructible complex on $G$ and let $CC(\sF)$ be its characteristic cycle (see \cite{KS} for the definition). Then $CC(\sF)=\sum_{j}n_j[\Lambda_j]$ is a $\zz$-linear combination of irreducible conic Lagrangian subvarieties in the cotangent bundle $T^*G$. The main result of \cite{FK} is the following\footnote{In \cite{FK}, the theorem is proved more generally for any semi-abelian variety $G$. }. 

\begin{theorem}{\cite[Theorem 1.3]{FK}}\label{Riemann}
Under the above notations, 
$$
\chi(G, \sF)=\sum_j n_j \cdot\gdeg(\Lambda_j). 
$$
\end{theorem}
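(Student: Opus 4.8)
The plan is to read the identity as a Gauss--Bonnet / microlocal index formula on the semi-abelian variety $G$ and to prove it by an additivity reduction followed by an index computation. First I would note that both sides are additive functionals of $\sF$: the Euler characteristic $\sF\mapsto \chi(G,\sF)$ is additive over distinguished triangles, Kashiwara's characteristic cycle $\sF\mapsto CC(\sF)$ is additive, and $\gdeg$ is $\zz$-linear on the group of conic Lagrangian cycles in $T^*G$. Hence both sides factor through the Grothendieck group $K_0(D^b_c(G))$, and it suffices to verify the equality on a generating set. Using the recollement triangles attached to an algebraic stratification, $K_0(D^b_c(G))$ is generated by the classes $[\,j_{V!}\cc_V\,]$ with $j_V\colon V\hookrightarrow G$ a smooth locally closed subvariety, for which $\chi(G, j_{V!}\cc_V)=\chi_c(V)=\chi(V)$ and $CC(j_{V!}\cc_V)$ is, up to the sign dictated by the complex orientation, $[T^*_V G]$ together with boundary terms supported over $\overline V\setminus V$.

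The engine of the proof is the microlocal index theorem: for a $1$-form whose graph meets $CC(\sF)$ in finitely many points, transversally, the weighted count of these points computes $\chi$ in the proper case (Dubson--Kashiwara). My plan is to feed into this the left-invariant forms $\Omega_\gamma$ from Section 2. The number of points of $CC(\sF)\cap\Omega_\gamma=\sum_j n_j(\Lambda_j\cap\Omega_\gamma)$ for generic $\gamma$ is, by the very definition recalled above, exactly $\sum_j n_j\cdot\gdeg(\Lambda_j)$; so the whole content is to identify this intersection number with $\chi(G,\sF)$. Because $\gamma$ is left-invariant, the family $\{\Omega_{t\gamma}\}_{t}$ interpolates between the zero section $\Omega_0$ and $\Omega_\gamma$ by the group translation in the cotangent directions, and I would argue that the intersection number $CC(\sF)\cdot\Omega_{t\gamma}$ is independent of $t$. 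On generators this reduces to the smooth case $\sF=\cc_V$, where a generic $\gamma|_V$ is a holomorphic Morse $1$-form whose zeros are precisely the points of $T^*_V G\cap\Omega_\gamma$; the signed count of these zeros is $(-1)^{\dim V}\chi(V)$ by complex Morse theory, matching the sign carried by $CC(\cc_V)$.

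The crux --- and the step I expect to be the main obstacle --- is the passage from the naive index theorem, valid for proper $G$, to the open variety $G=(\cc^*)^n$. On an open $G$ there are in general boundary corrections at infinity, and the equality can fail. The feature that rescues it is precisely that $\gamma$ is translation-invariant and generic: such a form has no zeros on $G$ (so, consistently, it never meets the zero section $\Omega_0$, reflecting $\chi(G)=0$), and I would need to show that as $t$ varies in $[0,1]$ no intersection point of $\Lambda_j$ with $\Omega_{t\gamma}$ escapes to the toric boundary. Making this \emph{non-escape} rigorous is the heart of the matter and is where the semi-abelian structure is indispensable; concretely I would either control the Lagrangians on a toric (or wonderful) compactification to which $\sF$ extends, or, equivalently and perhaps more cleanly, replace the deformation argument by the generic-vanishing statement that $\hh^i(G,\sF\otimes L_\rho)=0$ for $i\neq 0$ and a generic rank-one local system $L_\rho$ on $G$ (the role of $\gamma$ being played by the $d\log$ of $\rho$). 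That vanishing collapses $\chi(G,\sF)=\pm\dim \hh^0(G,\sF\otimes L_\rho)$ to a single cohomology group whose dimension one then matches with the transverse count $\sum_j n_j\gdeg(\Lambda_j)$, bypassing the boundary analysis entirely.
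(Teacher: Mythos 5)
Note first that the paper contains no proof of this statement: Theorem~\ref{Riemann} is imported verbatim from \cite[Theorem 1.3]{FK}, so your proposal must be measured against Franecki and Kapranov's published argument rather than anything in the text. In outline you have reconstructed the right shape: reduce by additivity of $\chi$, $CC$, and $\gdeg$, feed the invariant sections $\Omega_\gamma$ into a Dubson--Kashiwara-type index theorem, and isolate the failure of properness of $G$ as the crux. The published route (in \cite{FK}, and in the refinement used by Huh \cite{Hu}) is the first of your two options: one works on a compactification $\bar{G}$ on which the invariant form $\gamma=\sum_i\lambda_i\,dp_i/p_i$ extends as a section of the log cotangent bundle, with residue $\langle\lambda,v\rangle$ along the boundary divisor attached to a ray $v$; for generic $\lambda$ these residues are nonzero, which forces the extended section to avoid the boundary part of the (log) characteristic cycle, so no intersection point escapes to infinity and the compact index theorem applies. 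Your instinct that the invariance and genericity of $\gamma$, i.e.\ the semi-abelian structure, is what rescues the noncompact case is exactly right.

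Two steps of your write-up, however, would fail as stated. First, the proposed ``cleaner'' bypass via generic vanishing does not close the argument: twisting by a rank-one local system leaves $\chi(G,\sF)$ unchanged, so generic vanishing (Gabber--Loeser, for the torus) yields $\chi(G,\sF)=\pm\dim\hh^0(G,\sF\otimes L_\rho)$, but nothing in that statement identifies this single dimension with the transverse count $\sum_j n_j\,\gdeg(\Lambda_j)$; producing that identification is precisely the microlocal computation at infinity you were hoping to avoid, so the boundary analysis is relabelled, not bypassed. Second, the interpolation claim that $CC(\sF)\cdot\Omega_{t\gamma}$ is independent of $t\in[0,1]$ is false at $t=0$: $\Omega_0$ is the zero section, and every conic Lagrangian meets it non-transversally along its projection to $G$, so the count is not even defined there, while constancy for $t\neq 0$ again requires exactly the non-escape statement. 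Relatedly, the $K$-group reduction buys less than it appears to: the generators $j_{V!}\cc_V$ have $V$ noncompact, so compact complex Morse theory does not apply to them, and the assertion that a generic invariant $1$-form on open smooth $V$ has exactly $(-1)^{\dim V}\chi(V)$ zeros is not an available input --- it is essentially the smooth case of the theorem being proved (compare \cite[Theorem 1]{Hu}). In short: correct roadmap, crux correctly flagged, but both of your proposed ways of discharging the crux are incomplete, and the residue/log-compactification analysis of \cite{FK} must actually be carried out.
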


\begin{proof}[Proof of Theorem \ref{ic}]
Since $X$ is of dimension $d$, $\IC(\cc_X[d])$ as a constructible complex on $G$ is a perverse sheaf. Let $CC(IC(\cc_X[d]))=\sum_j n_j [\Lambda_j]$. The definition of the characteristic cycles is local. Along the regular locus $X_{\reg}$, the characteristic cycle is the cotangent bundle $T_{X_{\reg}}^*G$. At the singular locus $X_{\sing}$, the characteristic cycle may contain more cycles. Therefore we can assume $\Lambda_0=T_{X}^*G$ and $n_0=1$. Since $\IC(\cc_X[d])$ is a perverse sheaf, all the coefficients $n_j$ are nonnegative. Recall that, by definition, $\gdeg(\Lambda)\geq 0$ for any Lagrangian subvariety $\Lambda\subset T^*G$.  Thus,
$$
\chi(G, \IC(\cc_X[d]))=\sum_j n_j \gdeg(\Lambda_j)\geq n_0 \gdeg(\Lambda_0)=\gdeg(T_X^*G)=\MLdeg(X)
$$
where the last equality is by Lemma \ref{mlg}. 
\end{proof}

\section{Counterexamples}
In \cite{BW}, the authors gave examples of irreducible 2-dimensional subvarieties of $(\cc^*)^4$ with negative Euler characteristics. 
Let us review the construction of these examples first. 

We start with the smooth surface
$$
U=\{(p_1, p_2, p_3, p_4)\in (\cc^*)^4 \mid p_1+p_3=p_2+p_4=1\}
$$
in $(\cc^*)^4$. For any $m\in \nn$, we define a map of torus $\pi_m: (\cc^*)^4\to (\cc^*)^4$ by
$$
\pi_m: (p_1, p_2, p_3, p_4) \mapsto (p_1^m, \;\frac{p_1}{p_2},\; p_1p_3, \;p_1p_4). 
$$
Let $U_m=\pi_m(U)$. Since $\pi_m$ is finite and proper, $U_m$ are irreducible 2-dimensional subvarieties of $(\cc^*)^4$. 
\begin{prop}{\cite[Corollary 3.2]{BW}}\label{sing}
Assume $m$ is odd. Then $U_m$ has $\frac{m-1}{2}$ isolated singular points. Moreover, the germ of $U_m$ at each singular point is analytically equivalent to the germ of $\{p_1=p_2=0\}\cup \{p_3=p_4=0\}$ in $\cc^4$ at origin. In other words, locally the singularity is obtained by the transverse intersection of two smooth surfaces.
\end{prop}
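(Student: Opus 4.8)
The plan is to exploit the fact that $\pi_m$ is a surjective homomorphism of tori, hence a finite \'etale isogeny: its exponent matrix has determinant $\pm m$, so $\pi_m$ has degree $m$ and its differential is everywhere an isomorphism. Thus $\pi_m$ restricts to a local analytic isomorphism, and the image $U_m=\pi_m(U)$ of the \emph{smooth} surface $U$ is smooth away from the locus where $\pi_m|_U$ fails to be injective; its only possible singularities are self-intersections coming from distinct points of $U$ that $\pi_m$ glues together. The proof therefore splits into three tasks: (i) determine exactly which points of $U$ get identified, (ii) count the resulting double points, and (iii) show each identification is a transverse crossing.

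For (i) I would first compute $\ker\pi_m=\{(\zeta,\zeta,\zeta^{-1},\zeta^{-1})\mid \zeta^m=1\}$, so that $\pi_m(p)=\pi_m(p')$ iff $p'=(\zeta p_1,\zeta p_2,\zeta^{-1}p_3,\zeta^{-1}p_4)$ for some $m$-th root of unity $\zeta$. Imposing that both $p$ and $p'$ lie in $U$, i.e. $p_1+p_3=p_2+p_4=1$ together with $\zeta p_1+\zeta^{-1}p_3=\zeta p_2+\zeta^{-1}p_4=1$, the two resulting quadratics in $\zeta$ factor (after clearing denominators, using $p_3=1-p_1$ and $p_4=1-p_2$) as $p_i(\zeta-1)\bigl(\zeta-\tfrac{1-p_i}{p_i}\bigr)$ for $i=1,2$. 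Hence a nontrivial identification forces $\frac{1-p_1}{p_1}=\frac{1-p_2}{p_2}$, equivalently $p_1=p_2$, with the common value $\zeta=\frac{1-p_1}{p_1}$ required to be a nontrivial $m$-th root of unity. In particular the fiber through any point meets $U$ in at most two points, so every singularity of $U_m$ is an ordinary double point.

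Step (ii) is then bookkeeping: the special points of $U$ are $p=(p_1,p_1,1-p_1,1-p_1)$ with $p_1=\frac{1}{1+\zeta}$, one for each nontrivial $m$-th root of unity $\zeta$, giving $m-1$ points. The gluing matches the root $\zeta$ with $\zeta^{-1}$, its partner point being $(1-p_1,1-p_1,p_1,p_1)$; since $m$ is odd there is no fixed point $\zeta=\zeta^{-1}$ (which would force $\zeta=\pm1$). Thus the $m-1$ points pair into exactly $\frac{m-1}{2}$ double points.

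The heart of the argument, and the step I expect to be the main obstacle, is the transversality in (iii). Here I would push the tangent plane $T_pU=\langle(1,0,-1,0),(0,1,0,-1)\rangle$ forward by the Jacobian of $\pi_m$ at both $p$ and its partner $p'$, obtaining four explicit vectors in $\cc^4\cong T_{\pi_m(p)}(\cc^*)^4$, and show they are linearly independent. After reordering rows and columns the resulting $4\times4$ matrix becomes block upper triangular, and a short computation gives its determinant as $\frac{-m}{p_1(1-p_1)}\,(2p_1-1)^2\bigl(p_1^{m-1}+(1-p_1)^{m-1}\bigr)$. Writing $1-p_1=\zeta p_1$ and $\zeta^{m-1}=\zeta^{-1}$, the last factor equals $p_1^{m-1}(1+\zeta^{-1})$, nonzero precisely because $\zeta\neq -1$, while $2p_1-1\neq 0$ because $\zeta\neq 1$ — both guaranteed by $m$ odd. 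Hence the two tangent planes meet only at the origin and the two smooth branches cross transversally. Finally, a transverse union of two smooth surfaces at an isolated point of a smooth fourfold is, by a routine simultaneous straightening of the two branches onto their tangent planes, analytically equivalent to $\{p_1=p_2=0\}\cup\{p_3=p_4=0\}$, which completes the proof.
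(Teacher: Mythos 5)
Your proof is correct: the kernel computation $\ker\pi_m=\{(\zeta,\zeta,\zeta^{-1},\zeta^{-1})\mid\zeta^m=1\}$, the factorization forcing $p_1=p_2$ and $\zeta=\frac{1-p_1}{p_1}$, the pairing of the $m-1$ special points into $\frac{m-1}{2}$ double points via $\zeta\leftrightarrow\zeta^{-1}$, and the transversality determinant $\frac{-m}{p_1(1-p_1)}(2p_1-1)^2\bigl(p_1^{m-1}+(1-p_1)^{m-1}\bigr)$ all check out, with oddness of $m$ ruling out $\zeta=\pm1$ exactly where needed. Note that this paper does not reprove the statement but imports it from \cite{BW}; your fiber-of-the-isogeny analysis is essentially the argument given there, as confirmed by the fact that your glued points $\pi_m(p_\zeta)$ coincide with the singular points $\left(\frac{1}{(1+\xi^i)^m},\, 1,\, \frac{\xi^i}{(1+\xi^i)^2},\, \frac{\xi^i}{(1+\xi^i)^2}\right)$ quoted later in the paper from the proof of \cite[Lemma 3.1 and Corollary 3.2]{BW}.
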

It follows from the Proposition \ref{sing} that the Euler characteristic of $U_m$ is  $$\chi(U_m)=\frac{3-m}{2}.$$

Now, we construct counterexamples to Conjecture \ref{conjHS}. Denote by $H^n$ the hyperplanes $\{p_1+\cdots+p_n=1\}$ in $(\cc^*)^n$. Then there is an isomorphism $$\iota: (\cc^*)^4\setminus H^4\to H^5$$ given by
$$
\iota: (p_1, p_2, p_3, p_4)\mapsto (p_1, p_2, p_3, p_4, 1-p_1-p_2-p_3-p_4). 
$$
Let $V_m=\iota(U_m\setminus H^4)$. Then $V_m$ are irreducible 2-dimensional subvarieties of $H^5$. 

\begin{prop}\label{example}
Assume $m$ is odd. Then
$$\MLdeg(V_m)-\chi(V_m)=\frac{m-1}{2}.$$
\end{prop}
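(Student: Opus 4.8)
The plan is to show that the difference $\MLdeg(V_m)-\chi(V_m)$ equals the number of singular points of $V_m$, and that this number is exactly $\frac{m-1}{2}$. The two ingredients are the precise structure of the intersection complex at a transverse double point, which will force equality in Theorem \ref{ic}, and the behaviour of the Euler characteristic under normalization.

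First I would reduce to $U_m$. Since $\iota$ is an isomorphism, $V_m\cong U_m\setminus H^4$, so the singularities of $V_m$ are exactly those singular points of $U_m$ that do not lie on $H^4$, and each is, by Proposition \ref{sing}, analytically a transverse union of two smooth surface branches. Using $U_m=\pi_m(U)$ one locates these points: they are the images under $\pi_m$ of the points $p_\zeta$ where $U$ meets its $\zeta$-translate, for $\zeta$ an $m$-th root of unity $\ne 1$, paired by $\zeta\leftrightarrow\zeta^{-1}$ and hence $\frac{m-1}{2}$ in number. A direct computation of the coordinates of $\pi_m(p_\zeta)$ then checks that none of them satisfies $p_1+\cdots+p_4=1$, so all survive in $V_m$; thus $V_m$ has exactly $s=\frac{m-1}{2}$ singular points, each a transverse double point.

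Next, let $\nu\colon \widetilde{V_m}\to V_m$ be the normalization, which pulls the two sheets apart at each double point, so $\widetilde{V_m}$ is smooth and $\nu$ is finite and birational, hence small (over every stratum of positive codimension the fibres are $0$-dimensional, so the semismallness inequality is strict there). Consequently $\IC(\cc_{V_m})\cong\nu_*\cc_{\widetilde{V_m}}$. Two consequences follow. On one hand, $\chi(\IC(\cc_{V_m}))=\chi(\widetilde{V_m})$, and since $\nu$ replaces each of the $s$ singular points by two smooth points, $\chi(\widetilde{V_m})=\chi(V_m)+s$. On the other hand, near each singular point the two branches $L_1,L_2$ are separated by $\nu$, so locally $\nu_*\cc_{\widetilde{V_m}}[2]\cong\cc_{L_1}[2]\oplus\cc_{L_2}[2]$, whose characteristic cycle is $[T^*_{L_1}G]+[T^*_{L_2}G]$ with no cotangent-fibre term $[T^*_P G]$. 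Globally, inside $G=(\cc^*)^5$, this says $CC(\IC(\cc_{V_m}[2]))=[T^*_{V_m}G]$, i.e. all the extra Lagrangian cycles $\Lambda_j$ with $j\ge 1$ appearing in the proof of Theorem \ref{ic} vanish here.

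Finally I would assemble the pieces. With no extra terms, Theorem \ref{Riemann} together with Lemma \ref{mlg} gives $\chi(\IC(\cc_{V_m}))=\chi(G,\IC(\cc_{V_m}[2]))=\gdeg(T^*_{V_m}G)=\MLdeg(V_m)$, so the inequality of Theorem \ref{ic} becomes an equality. Combining this with $\chi(\IC(\cc_{V_m}))=\chi(V_m)+s$ and $s=\frac{m-1}{2}$ yields $\MLdeg(V_m)=\chi(V_m)+\frac{m-1}{2}$, as claimed. The conceptual heart, and the step most likely to need care, is the identification $\IC(\cc_{V_m})\cong\nu_*\cc_{\widetilde{V_m}}$ at the transverse double points: this single local computation simultaneously produces the Euler-characteristic jump $\frac{m-1}{2}$ and kills the cotangent-fibre contributions, turning the bound of Theorem \ref{ic} into an exact equality. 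The coordinate check that the singular points avoid $H^4$ is routine but cannot be skipped, since the final count $\frac{m-1}{2}$ depends on all of them surviving in $V_m$.
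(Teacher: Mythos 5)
Your proof is correct and follows the same skeleton as the paper's: establish (a) $\chi(\IC(\cc_{V_m}))-\chi(V_m)=\frac{m-1}{2}$ and (b) $CC(\IC(\cc_{V_m}))=[T^*_{V_m}G]$, then combine Lemma \ref{mlg} with Theorem \ref{Riemann} to turn the inequality of Theorem \ref{ic} into an equality. The one genuine difference is how you justify (a) and (b). The paper works locally: at each transverse double point it identifies $\IC(\cc_{V_m\cap D})\cong \cc_{S_1}\oplus\cc_{S_2}$ by the definition of the intersection complex, globalizes via the short exact sequence $0\to\cc_{V_m}\to \IC(\cc_{V_m})\to\bigoplus_i\cc_{P_i}\to 0$ to get (a), and computes the characteristic cycle stratum by stratum to get (b). You instead invoke the normalization $\nu\colon\widetilde{V_m}\to V_m$, which is finite and birational, hence small, so that $\IC(\cc_{V_m})\cong\nu_*\cc_{\widetilde{V_m}}$ up to shift; this single global isomorphism simultaneously yields the Euler-characteristic jump ($\chi(\widetilde{V_m})=\chi(V_m)+s$, each node having two preimages) and the absence of cotangent-fibre components in $CC$. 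That packaging is arguably cleaner and more general --- it works verbatim for any variety with smooth normalization --- while the paper's argument is more explicit and self-contained at the level of the local geometry; the two computations of course coincide here. One caution: you call the verification that the singular points of $U_m$ avoid $H^4$ a ``routine'' coordinate check. It is not mere substitution: one must show $2\xi^i(1+\xi^i)^{m-2}+1\neq 0$ for the relevant roots of unity $\xi^i$, which the paper does by an irreducibility argument (Eisenstein's criterion for $2x(1+x)^{m-2}+1$, which therefore cannot share a root with $x^{m-1}+\cdots+1$). Your proof needs this step spelled out, but that is the only place where detail is missing.
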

When $m$ is odd and larger than 1, the proposition provides a family of counter examples of Conjecture \ref{conjHS}. Before proving the proposition, we need the following lemma.
\begin{lemma}
When $m$ is odd, $V_m$ has $\frac{m-1}{2}$ isolated singular points. Moreover, analytically locally each singularity is obtained by the transverse intersection of two smooth surfaces. 
\end{lemma}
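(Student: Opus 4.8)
The plan is to exploit that $\iota$ is an isomorphism $(\cc^*)^4\setminus H^4\to H^5$, so that $V_m=\iota(U_m\setminus H^4)$ is isomorphic, as a variety, to the open subvariety $U_m\setminus H^4$ of $U_m$. An isomorphism preserves both the number of singular points and their local analytic type, and smoothness is a local property, so the singular locus of $V_m$ is identified with $(U_m)_{\sing}\setminus H^4$. By Proposition \ref{sing}, $(U_m)_{\sing}$ consists of exactly $\frac{m-1}{2}$ points, each analytically a transverse intersection of two smooth surfaces. Thus it suffices to show that none of these $\frac{m-1}{2}$ singular points lies on the hyperplane $H^4=\{p_1+p_2+p_3+p_4=1\}$; then all of them survive in $V_m$ with their local type intact, and the claim follows.

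Next I would write the singular points down explicitly. Following the construction of $U_m$, a singular point arises precisely when $\pi_m$ identifies two distinct points of $U$. Parametrizing $U$ by $(s,t)\mapsto(s,t,1-s,1-t)$ and solving the identification equations shows that the colliding pairs are $(s_1,s_1,1-s_1,1-s_1)$ and $(1-s_1,1-s_1,s_1,s_1)$ with $s_1=(1+\zeta)^{-1}$, where $\zeta$ runs over the $m$-th roots of unity with $\zeta\neq 1$ (and $\zeta,\zeta^{-1}$ give the same point, accounting for the count $\frac{m-1}{2}$). The corresponding singular point is
$$
P_\zeta=\bigl(s_1^m,\ 1,\ s_1(1-s_1),\ s_1(1-s_1)\bigr).
$$

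The heart of the argument, and the step I expect to be the main obstacle, is to show $P_\zeta\notin H^4$. Summing the coordinates, $P_\zeta\in H^4$ would force $s_1^m+2s_1(1-s_1)=0$, which after clearing denominators (using $s_1=(1+\zeta)^{-1}$ and $1-s_1=\zeta(1+\zeta)^{-1}$) is equivalent to $2\zeta(1+\zeta)^{m-2}=-1$. The clean way past this is an integrality observation: writing $\zeta=e^{2\pi i k/m}$ and $1+\zeta=2\cos(\pi k/m)\,e^{\pi i k/m}$, one computes
$$
2\zeta(1+\zeta)^{m-2}=2(-1)^k\bigl(2\cos(\tfrac{\pi k}{m})\bigr)^{m-2},
$$
which is a \emph{real} number. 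Were it equal to $-1$, we would get $\bigl(2\cos(\tfrac{\pi k}{m})\bigr)^{m-2}=\pm\tfrac12$. But $2\cos(\pi k/m)=e^{\pi i k/m}+e^{-\pi i k/m}$ is a sum of roots of unity, hence an algebraic integer, so any of its powers is an algebraic integer; since $\pm\frac12$ is a non-integral rational, this is impossible. Hence no $P_\zeta$ lies on $H^4$.

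Combining these steps, $V_m\cong U_m\setminus H^4$ retains all $\frac{m-1}{2}$ singular points of $U_m$, each analytically locally the transverse intersection of two smooth surfaces, which is the assertion. Everything besides the integrality argument ruling out $2\zeta(1+\zeta)^{m-2}=-1$ is bookkeeping with the isomorphism $\iota$ and Proposition \ref{sing}; that one inequality is the only genuinely delicate point.
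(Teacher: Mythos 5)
Your proof is correct, and its skeleton coincides with the paper's: both reduce, via the isomorphism $\iota$, to showing that no singular point of $U_m$ lies on $H^4$; both use the explicit coordinates $\left(\frac{1}{(1+\zeta)^m},\, 1,\, \frac{\zeta}{(1+\zeta)^2},\, \frac{\zeta}{(1+\zeta)^2}\right)$ of the singular points (which you re-derive from the collision pairs of $\pi_m|_U$, while the paper cites the proof of \cite[Lemma 3.1 and Corollary 3.2]{BW}); and both arrive at exactly the same obstruction equation $2\zeta(1+\zeta)^{m-2}+1=0$. The genuine difference is how this equation is ruled out. The paper argues algebraically: by Eisenstein's criterion the polynomial $2x(1+x)^{m-2}+1$ is irreducible over $\qq$ of degree $m-1$, hence cannot share a root with $x^{m-1}+\cdots+x+1$ (an irreducible polynomial sharing a root would divide it, forcing proportionality, which fails on comparing leading and constant coefficients), whereas $\zeta$ is a root of the latter. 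You argue arithmetically: writing $\zeta=e^{2\pi i k/m}$, the quantity $2\zeta(1+\zeta)^{m-2}=2(-1)^k\bigl(2\cos(\tfrac{\pi k}{m})\bigr)^{m-2}$ is real, so the equation would force $\bigl(2\cos(\tfrac{\pi k}{m})\bigr)^{m-2}=\pm\tfrac12$, impossible because $2\cos(\tfrac{\pi k}{m})=e^{i\pi k/m}+e^{-i\pi k/m}$ is an algebraic integer (a sum of roots of unity), and a rational algebraic integer must lie in $\zz$. Your route is more elementary and self-contained: it needs only the integrality of sums of roots of unity, and it sidesteps the slightly delicate point that Eisenstein does not apply verbatim to $2x(1+x)^{m-2}+1$ (one must pass to the reciprocal polynomial $x^{m-1}+2(1+x)^{m-2}$ and use the prime $2$), a step the paper leaves implicit. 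The paper's route, in exchange, is shorter to state and gives the stronger structural fact that the obstruction polynomial is irreducible. Both arguments are complete and correct.
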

\begin{proof}[Proof of Lemma]
Since $\iota: (\cc^*)^4\setminus H^4\to H^5$ is an isomorphism, it suffices to prove that $U_m\setminus H^4$ has $\frac{m-1}{2}$ isolated singular points and analytically locally each singularity is obtained by the transverse intersection of two smooth surfaces. According to Proposition \ref{sing}, we only need to prove $H^4$ contains none of the singular points of $U_m$. 

Let $\xi=e^{\frac{2\pi \sqrt{-1}}{m}}$. It follows from the proof of \cite[Lemma 3.1 and Corollary 3.2]{BW} that the singular points of $U_m$ are precisely 
$$\left(\frac{1}{(1+\xi^i)^m}, 1, \frac{\xi^i}{(1+\xi^i)^2}, \frac{\xi^i}{(1+\xi^i)^2}\right)\in (\cc^*)^4$$
where $i\in \{1, \ldots, \frac{m-1}{2}\}$. 

Suppose $\frac{1}{(1+\xi^i)^m}+1+\frac{\xi^i}{(1+\xi^i)^2}+\frac{\xi^i}{(1+\xi^i)^2}=1$. Then $2\xi^i(1+\xi^i)^{m-2}+1=0$. According to Eisenstein's criterion, the polynomial $2x(1+x)^{m-2}+1$ is irreducible. Therefore $2x(1+x)^{m-2}+1$ and $x^{m-1}+x^{m-2}+\cdots+1$ can not share a common root. Thus, we have a contradiction to the equality $2\xi^i(1+\xi^i)^{m-2}+1=0$. 

\end{proof}
\begin{proof}[Proof of Proposition \ref{example}]
According to the preceding lemma, we can denote all the singular points of $V_m$ by $P_1, \ldots, P_{\frac{m-1}{2}}$. 

Since $\iota$ is an isomorphism, it follows from Proposition \ref{sing} that the germ of $V_m$ at each singular point is isomorphic to the transverse intersection of two smooth surfaces. Let $D\subset (\cc^*)^5$ be a small multi-disk centered at $P_1$. Then $V_m\cap D$ is the union of two smooth surfaces, which we denote by $S_1$ and $S_2$. By the definition of intersection complexes, 
$$
IC(\cc_{V_m\cap D})\cong \cc_{S_1}\oplus \cc_{S_2}. 
$$
The same is true near every singular point $P_i$. Notice that intersection complexes are constructed locally. Therefore, we have short exact sequence,
$$
0\to \cc_{V_m}\to IC(\cc_{V_m})\to  \bigoplus_{1\leq i\leq \frac{m-1}{2}} \cc_{P_i}\to 0.
$$
Thus, 
$$
\chi((\cc^*)^5, \IC(\cc_{V_m}))-\chi(V_m)=\frac{m-1}{2}.
$$

Denote the torus $(\cc^*)^5$ by $G$. We claim that $CC(\IC(\cc_{V_m}))=[T^*_{V_m}G]$. The characteristic variety is constructed locally. Since $V_m$ has dimension 2, the claim is clearly true along the smooth locus of $V_m$. Near the singular point $P_1$, $\IC(\cc_{V_m\cap D})\cong \cc_{S_1}\oplus \cc_{S_2}$. Therefore, 
$$
CC(\IC(\cc_{V_m\cap D}))=[T^*_{S_1}G]+[T^*_{S_2}G]=[T^*_{V_m\cap D}G]. 
$$
Thus the claim is still true at the singular points of $V_m$. Now, we have
$$
\MLdeg(V_m)=\gdeg(T^*_{V_m}G)=\gdeg(CC(\IC(\cc_{V_m})))=\chi(G, \IC(\cc_{V_m}))=\chi(V_m)+\frac{m-1}{2}
$$
where the first equality is by Lemma \ref{mlg}, the second and the forth are by the above discussion, the third is by Theorem \ref{Riemann}. The equality $\MLdeg(V_m)=\chi(V_m)+\frac{m-1}{2}$ gives what we need to prove in the proposition. 
\end{proof}


\begin{thebibliography}{CHKS}

\bibitem[BW]{BW} N. Budur and B. Wang, The signed Euler characteristic of very affine varieties. arXiv:1403.4371. To appear in Int. Math. Res. Notices.

\bibitem[CHKS]{CHKS} F. Catanese, S. Ho\c{s}ten, A. Khetan,  and B. Sturmfels, The maximum likelihood degree. Amer. J. Math. 128 (2006), no. 3, 671--697.

\bibitem[FK]{FK} J. Franecki and M. Kapranov, The Gauss map and a noncompact Riemann-Roch formula for constructible sheaves on semiabelian varieties. Duke Math. J. 104 (2000), no. 1, 171-180. 



\bibitem[H]{Hu} J. Huh,  The maximum likelihood degree of a very affine variety. Compos. Math. 149 (2013), no. 8, 1245--1266.


\bibitem[HS]{HS} J. Huh and B. Sturmfels, Likelihood Geometry. In ``Combinatorial Algebraic Geometry", Lecture Notes in Mathematics 2108, Springer, 2014.

\bibitem[KS]{KS} M. Kashiwara and P. Schapira, Sheaves on manifolds.  Grundlehren der Mathematischen Wissenschaften, 292. Springer-Verlag, Berlin, 1990.


\end{thebibliography}
\end{document}